\documentclass{article}

\usepackage[margin=1in,footskip=0.25in]{geometry} \usepackage[dvipsnames,table]{xcolor}             \DeclareMathAlphabet{\mathcal}{OMS}{cmsy}{m}{n}

\usepackage[sort&compress,square,comma,numbers]{natbib} \usepackage[pagebackref]{hyperref}                \usepackage{url}                                  \usepackage{breakcites}                           

\usepackage[nosumlimits]{amsmath} \usepackage{amsthm}               \usepackage{amssymb}              \usepackage{amsfonts}             \usepackage{bm}                   \usepackage{bbm}                  \usepackage{mathtools}            \usepackage{nicefrac}             \usepackage{stmaryrd}             \usepackage{commath}              

\usepackage{graphicx}       \usepackage{float}          \usepackage{caption}        \usepackage{subcaption}     \usepackage{booktabs}       \usepackage[most]{tcolorbox}  

\usepackage{algorithm}      \usepackage{algpseudocode}  \usepackage{algorithmicx}   

\usepackage{enumitem}       \usepackage{xspace}         \usepackage{relsize}        \usepackage{titlesec}       \usepackage[normalem]{ulem} \usepackage{contour}        \usepackage{cancel}         \usepackage{todonotes}      \usepackage[capitalize,nameinlink,noabbrev]{cleveref} \usepackage{crossreftools}

\hypersetup{
    colorlinks=true,        linkcolor=PineGreen,    citecolor=NavyBlue,     filecolor=Purple,       urlcolor=Purple         }

\renewcommand*{\backref}[1]{}
\renewcommand*{\backrefalt}[4]{\ifcase #1 (No citations.)\or
        (Cited on page~#2.)\else
        (Cited on pages~#2.)\fi
}

\titlespacing*{\section}{0pt}{12pt}{5pt}
\titlespacing*{\subsection}{0pt}{11pt}{5pt}
\titlespacing*{\subsubsection}{0pt}{11pt}{5pt}
\titlespacing*{\paragraph}{0pt}{6pt}{1em}
\floatstyle{plaintop}
\restylefloat{table}

\algrenewcommand\alglinenumber[1]{\sf\scriptsize\color{NavyBlue}{#1}} 

\let\oldthebibliography\thebibliography
\renewcommand{\thebibliography}[1]{\oldthebibliography{#1}\setlength{\itemsep}{0pt}\setlength{\parskip}{0pt}}

\newtheorem{theorem}{Theorem}[section]

\newtheorem{lemma}[theorem]{Lemma}
\newtheorem{fact}[theorem]{Fact}

\newtheorem{importedtheorem}[theorem]{Imported Theorem}

\theoremstyle{definition}
\newtheorem{definition}[theorem]{Definition}

\theoremstyle{remark}

\crefname{setting}{setting}{settings}
\Crefname{setting}{Setting}{Settings}
\crefname{problem}{problem}{problems}
\Crefname{problem}{Problem}{Problems}
\Crefname{equation}{Equation}{Equations}
\Crefname{importedtheorem}{Imported Theorem}{Imported Theorems}
\Crefname{importedlemma}{Imported Lemma}{Imported Lemmas}

\makeatletter
\newtheorem*{rep@theorem}{\rep@title}
\newcommand{\newreptheorem}[2]{\newenvironment{rep#1}[1]{\def\rep@title{\Cref{##1}, restated}\begin{rep@theorem}}{\end{rep@theorem}}}
\makeatother

\newreptheorem{theorem}{Theorem}
\newreptheorem{lemma}{Lemma}
\newreptheorem{definition}{Definition}
\newreptheorem{corollary}{Corollary}

\contourlength{0.8pt}

\makeatletter
\def\hlinewd#1{\noalign{\ifnum0=`}\fi\hrule \@height #1 \futurelet
	\reserved@a\@xhline}
\makeatother

\DeclareMathOperator*{\E}{\mathbb{E}}
\DeclareMathOperator{\tr}{tr}

\DeclareMathOperator{\TV}{TV}
\newcommand{\KL}{\mathrm{D}_{\mathrm{KL}}}

\newcommand{\defeq}{\ensuremath{\;{\vcentcolon=}\;}\xspace}
  \newcommand{\eps}[0]{\varepsilon} \let\epsilon\eps

\let\norm\relax
\newcommand{\norm}[1]{\enVert{#1}}

\newcommand{\mat}[1]{\bm{#1}}

\newcommand{\etal}{et al.\xspace}

\newcommand{\mA}{\ensuremath{\mat{A}}\xspace}

\newcommand{\mG}{\ensuremath{\mat{G}}\xspace}
\newcommand{\mH}{\ensuremath{\mat{H}}\xspace}
\newcommand{\mI}{\ensuremath{\mathbf{I}}\xspace}

\newcommand{\mW}{\ensuremath{\mat{W}}\xspace}

\newcommand{\cA}{\ensuremath{{\mathcal A}}\xspace}

\newcommand{\cN}{\ensuremath{{\mathcal N}}\xspace}
\newcommand{\cO}{\ensuremath{{\mathcal O}}\xspace}
\newcommand{\cP}{\ensuremath{{\mathcal P}}\xspace}
\newcommand{\cQ}{\ensuremath{{\mathcal Q}}\xspace}

\newcommand{\cS}{\ensuremath{{\mathcal S}}\xspace}

\newcommand{\bbR}{\ensuremath{{\mathbb R}}\xspace}

\newcommand{\rC}{\ensuremath{\mathrm{C}}\xspace}

\title{A non-asymptotic bound on the TV distance between a \\Wishart matrix and an appropriately scaled GOE matrix.}

\author{Raphael A. Meyer\thanks{Department of Statistics, University of California Berkeley and International Computer Science Institute, Berkeley, CA 94720 USA 
(\href{mailto:ram900@berkeley.edu}{ram900@berkeley.edu})
}}

\date{\today}

\begin{document}

\maketitle

\begin{abstract}
    In this note, we prove a non-asymptotic version of a theorem by R\'acz and Richey, showing that a Wishart matrix is close in total variation to an affine transformation of a GOE matrix.
    The proof mirrors a proof in a paper by Bubeck, Ding, Eldan, and R\'acz, with some changes made to make it non-asymptotic.
\end{abstract}

\section{Introduction}
In this note, we prove a non-asymptotic version of \cite[Thm.~1.2]{racz2019smooth} by R\'acz and Richey.
A rich line of research, including but not limited to \cite{bubeck2016testing,bubeck2018entropic,chetelat2019middle,racz2019smooth,brennan2021finetti,bourguin2021limiting,mikulincer2022clt,nourdin2022asymptotic}, has shown that results like ours hold in the asymptotic regime (i.e. in the limit as \(n \to \infty\)).
However, we are not aware of any existing works that prove this result non-asymptotically.
The proof mirrors the proof of Theorem 4.1 in a paper by Bubeck, Ding, Eldan, and R\'acz \cite{bubeck2016testing}, with some changes made to make it non-asymptotic.

This non-asymptotic result is used in a recent paper of Derezi{\'n}ski, Epperly, and Meyer \cite{derezinski2026matrix} as well as a paper of Musco, Musco, Shah, Urschel, and West \cite{musco2026spectral}.
To state the result, we will have to define two random matrices:
\begin{definition}[GOE Matrix]
	A random matrix \(\mG\in\bbR^{n \times n}\) is drawn from the \emph{Gaussian orthogonal ensemble} if it is a symmetric matrix with independent entries on the upper triangle such that for any \(i < j\)
	\[
		[\mG]_{ii} \sim \cN(0,2)
		\qquad
		\text{and}
		\qquad
		[\mG]_{ij} \sim \cN(0,1).
	\]
	We will denote this random matrix as \(\mG \sim \mathrm{GOE}(n)\).
	A GOE matrix is also distributed as \(\mG = \frac1{\sqrt 2} (\mH + \mH^\top)\) where \(\mH\in\bbR^{n \times n}\) has iid \(\cN(0,1)\) entries.
\end{definition}
\begin{definition}[Wishart Matrix]
	Let \(\mH\in\bbR^{n \times k}\) be a matrix with iid \(\cN(0,1)\) entries.
	Then, the matrix \(\mW = \mH\mH^\top \in \bbR^{n \times n}\) is a \emph{Wishart matrix}.
	We denote this random matrix as \(\mW \sim \mathrm{Wishart}(n,k)\).
\end{definition}
We will prove the following statement:
\begin{theorem}
	\label{thm:main}
	Let \(\mG \sim \mathrm{GOE}(n)\) and \(\mW \sim \mathrm{Wishart}(n,k)\).
	Then, if \(k \gtrsim n^3\), the total variation distance between \(\mW\) and \(\widehat\mG \defeq \sqrt k \mG + k\mI\) is at most
	\[
		\mathrm{TV}(\mW, \widehat\mG) \lesssim \sqrt{\frac{n^3}{k}}.
	\]
\end{theorem}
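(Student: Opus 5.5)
Following the strategy of \cite[Thm.~4.1]{bubeck2016testing}, we will compare densities directly. For \(k \ge n\), the Wishart density on the cone of positive definite matrices is
\[
f_{\mW}(\mA) = \frac{\det(\mA)^{(k-n-1)/2} e^{-\tr(\mA)/2}}{2^{kn/2}\,\Gamma_n(k/2)},
\]
where \(\Gamma_n\) is the multivariate Gamma function. The density of \(\widehat\mG\), with respect to Lebesgue measure on the upper-triangular entries, is
\[
f_{\widehat\mG}(\mA) \propto \exp\bigl(-\tfrac{1}{4k}\norm{\mA - k\mI}_F^2\bigr).
\]
Both densities are explicit, so we will bound \(\TV\) by controlling the log-likelihood ratio. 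Concretely, we use \(\TV(\mW,\widehat\mG) \le \Pr_{\widehat\mG}[E^c] + \E_{\widehat\mG}\bigl[\,\abs{f_{\mW}/f_{\widehat\mG} - 1}\,\mathbbm{1}_E\bigr]\) for a high-probability event \(E\). An equivalent route is to bound \(\KL\) or the Hellinger distance restricted to \(E\).

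\textbf{Step 1: Taylor expansion of the log-ratio.} Write \(\mA = k\mI + \sqrt k\,\mG\) and \(\mathbf{X} = \mG/\sqrt k\), and let \(\lambda_1,\dots,\lambda_n\) be the eigenvalues of \(\mathbf X\). Then \(\log\det\mA = n\log k + \sum_i \log(1+\lambda_i)\). Expanding \(\log(1+x) = x - x^2/2 + x^3/3 - \dots\), the log of the Wishart density becomes
\[
\tfrac{k-n-1}{2}\bigl(\tr\mathbf X - \tfrac12\tr\mathbf X^2 + \tfrac13\tr\mathbf X^3 - \dots\bigr) - \tfrac{k}{2}\tr\mathbf X + \text{const}.
\]
The linear terms nearly cancel, leaving \(-\tfrac{n+1}{2}\tr\mathbf X\). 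The quadratic term is \(-\tfrac{k}{4}\tr\mathbf X^2 = -\tfrac14\norm{\mG}_F^2\), which matches the Gaussian exponent exactly. The remaining terms are:
\begin{itemize}
\item \(-\tfrac{n+1}{2\sqrt k}\tr\mG\);
\item \(+\tfrac{n+1}{4k}\norm{\mG}_F^2\);
\item the cubic term \(\tfrac{k}{6}\tr\mathbf X^3 = \tfrac{1}{6\sqrt k}\tr\mG^3\);
\item higher-order terms of size \(O(k\sum_i\abs{\lambda_i}^4)\).
\end{itemize}
The normalizing constants must also be matched. This requires a Stirling expansion of \(\Gamma_n(k/2) = \pi^{n(n-1)/4}\prod_j\Gamma((k-j+1)/2)\), compared against the Gaussian normalizer, with errors tracked explicitly rather than as \(o(1)\).

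\textbf{Step 2: Concentration on a good event.} Let \(E\) be the event that \(\norm{\mG}_{op} \le C\sqrt n\), that \(\abs{\tr\mG}\), \(\abs{\norm{\mG}_F^2 - \E\norm{\mG}_F^2}\) and \(\abs{\tr\mG^3}\) lie within constant multiples of their standard deviations, and similar conditions as needed. Standard Gaussian tail bounds will give \(\Pr[E^c]\) as small as we like, or alternatively of order \(\sqrt{n^3/k}\).

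On \(E\) we have \(\abs{\lambda_i} \lesssim \sqrt{n/k} \le 1/2\), using \(k\gtrsim n^3\), so the Taylor series converges. The quartic remainder is then \(\lesssim k\cdot n\cdot(n/k)^2 = n^3/k\). The fluctuating terms have standard deviations of order:
\begin{itemize}
\item \(\tfrac{n}{\sqrt k}\cdot\sqrt n\) for the trace term;
\item \(\tfrac{n}{k}\cdot n\) for the Frobenius term;
\item \(\tfrac{1}{\sqrt k}\cdot n^{3/2}\) for \(\tr\mG^3\), since \(\mathrm{Var}(\tr\mG^3)\asymp n^3\).
\end{itemize}
After matching constants, which absorbs all means, the centered log-ratio \(L\) therefore satisfies \(\abs{L}\lesssim\sqrt{n^3/k}\) on most of \(E\), and \(\E[L^2]\lesssim n^3/k\). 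Using \(\abs{e^L-1}\le\abs L e^{\abs L}\) and Cauchy--Schwarz then gives \(\TV\lesssim\sqrt{n^3/k}\).

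\textbf{Main obstacle.} The delicate step is the exact bookkeeping of the deterministic constants. The expected values of the \(\norm{\mG}_F^2\) term, of the quartic term \(-\tfrac{k}{4}\tr\mathbf X^4\) (whose mean \(\approx n^3/(2k)\) has the same order as the target), and of any mean shift must cancel against the Stirling expansion of \(\log\Gamma_n(k/2)\) up to an error \(O(\sqrt{n^3/k})\). Without this cancellation the likelihood ratio would be off by a constant factor. We will first compute \(\log\Gamma_n(k/2)\) to second order in \(n/k\), summing the Stirling corrections \(\sum_j j^2/k\sim n^3/k\). We will then verify that the net deterministic offset is \(O(n^3/k)\), which is at most \(O(\sqrt{n^3/k})\) when \(n^3\lesssim k\).

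If exact constant matching proves too messy, there is a fallback that avoids normalizers altogether. Since both densities are normalized, \(\TV\) equals \(\E_{\widehat\mG}[(1-f_{\mW}/f_{\widehat\mG})_+]\). It then suffices to show that the unnormalized ratio \(e^{L}\) concentrates around its mean, i.e.\ \(\mathrm{Var}(e^{L}\mathbbm 1_E)/(\E e^{L}\mathbbm 1_E)^2\lesssim n^3/k\), together with \(\Pr[E^c]\) small. This variance bound follows from the same variance estimates as above.
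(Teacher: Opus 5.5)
Your strategy is sound, but it takes a different route from the paper's.

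\textbf{The paper's route.} The paper never controls fluctuations of the log-likelihood ratio.
\begin{enumerate}
\item It conditions $\widehat\mG$ on $\{\norm{\mG}_2\le\frac12\sqrt k\}$, which costs $e^{-k}$ in TV.
\item It bounds $\KL(\widehat\mG_+\|\mW)$ by the conditional mean of a one-sided \emph{upper} bound on $\log(f_{\rm shifted}/f_{\rm wishart})$. That upper bound comes from the upper Stirling bound plus a cubic Taylor expansion with Lagrange remainder, valid because all eigenvalues are at least $k/2$.
\item The linear and cubic trace terms then have conditional mean zero by the symmetry $\mG\mapsto-\mG$. The quadratic term is nonpositive. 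The quartic remainder and the normalizer error are $O(n^3/k)$.
\item Pinsker converts $\KL\lesssim n^3/k$ into $\TV\lesssim\sqrt{n^3/k}$.
\end{enumerate}
So in the paper the fluctuating terms $\frac{n}{\sqrt k}\tr\mG$ and $\frac{1}{\sqrt k}\tr\mG^3$ drop out, and the square root comes from Pinsker. In your argument those same terms, whose standard deviations are $\asymp\sqrt{n^3/k}$, produce the rate directly.

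\textbf{What your route costs and buys.} You need two-sided constant matching: also $\log\Gamma(x)\ge h(x)$ and a lower bound on $\log(1-\frac{i-1}{k})$. You need second-moment bounds such as $\mathrm{Var}(\tr\mG^3)\lesssim n^3$ and $\E[(\tr\mG^4)^2]\lesssim n^6$; both are easy via the Gaussian Poincar\'e inequality. You also need more care with the event. In exchange you get a direct description of the likelihood ratio, showing that $\sqrt{n^3/k}$ is its actual fluctuation scale.

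\textbf{Two steps as written need repair.}

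First, your event $E$ is too restrictive or too weak as stated.
\begin{itemize}
\item Requiring $|\tr\mG|$, $\tr\mG^3$, etc.\ to lie within \emph{constant} multiples of their standard deviations makes $\Pr[E^c]$ a constant.
\item $\norm{\mG}_2\le C\sqrt n$ alone fails with probability about $e^{-cn}$. That is not $\lesssim\sqrt{n^3/k}$ when $n$ is small and $k$ is large.
\end{itemize}
Take $E=\{\norm{\mG}_2\le\frac12\sqrt k\}$ only. The pointwise claim ``$|L|\lesssim\sqrt{n^3/k}$ on most of $E$'' is not needed. Forcing it to hold with high enough probability would reintroduce the logarithmic factor.

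Second, $|e^L-1|\le|L|e^{|L|}$ plus Cauchy--Schwarz requires $\E[e^{2|L|}\mathbbm 1_E]=O(1)$. You never establish this, and it is not automatic: on $E$ the linear and cubic terms can be polynomially large in $n$ and $k$. Avoid it with the one-sided identity $\TV=\E_{\widehat\mG}[(1-e^{L})_+]$ together with $(1-e^L)_+\le|L|$. This gives $\TV\le\Pr[E^c]+\sqrt{\E[L^2\mathbbm 1_E]}$. On $E$ the quartic remainder is $\lesssim\frac1k\tr\mG^4$, with second moment $\lesssim n^6/k^2\le n^3/k$.

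Your fallback has the same problem: a variance bound for $e^{L}\mathbbm 1_E$ needs exponential moments, not just the variance estimates. It also needs a bound on the Wishart mass of $E^c$ to relate the truncated normalizer to the true one.
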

The original theorem of Bubeck \etal \cite{bubeck2016testing} says that \(\mathrm{TV}(\mW, \widehat\mG) \to 0\) if \(k^3/n \to \infty\).
A trivial adaptation of the proof technique used in their paper would show the TV distance is bounded by \(\sqrt{n^3k^{-1} \log(n^3k^{-1})}\).
We slightly alter their proof, using the KL Divergence and Pinsker's Inequality, to avoid this issue.

\section{Preliminaries}

We write \(a \lesssim b\) if \(a \leq \rC b\) for some universal constant \(\rC > 0\), and define \(a \gtrsim b\) analogously.
We write \(a \asymp b\) if \(a \lesssim b \lesssim a\).

\subsection{Information Theory}

We begin by defining the two fundamental information-theoretic metrics.
Recall that for any two distributions \(\cP\) and \(\cQ\), a coupling is a joint probability distribution \((x,y)\) such that the marginal distributions satisfy \(x\sim\cP\) and \(y\sim\cQ\).
The total variation distance has many equivalent definitions.
One says that distributions are close in total variation iff there exists a coupling that makes \(x = y\) with high probability.

\begin{definition}[Total Variation]
    Let \(X\) and \(Y\) be random variables over a shared space.
    Then the \emph{total variation} distance between \(X\) and \(Y\) is
    \[
        \TV(X,Y) = \inf_{\text{Couplings} (X,Y)} \Pr[X \neq Y].
    \]
\end{definition}

It will be helpful to condition our total variations based off of high probability events.
For this, we will need to know the TV Triangle inequality
\begin{align}
    \TV(X, Y) \leq \TV(X, Z) + \TV(Z, Y),
    \label{tv:triangle-ineq}
\end{align}
which holds for any random variables \(X\), \(Y\), and \(Z\).
We will also need the KL Divergence, which is often much easier to analyze than the TV distance.

\begin{definition}[KL Divergence]
    Let \(X\) and \(Y\) be random variables with pdfs \(f_{\rm X}\) and \(f_{\rm Y}\) over a shared space.
    Then the \emph{KL Divergence} between \(X\) and \(Y\) is
    \[
        \KL(X \| Y) = \E_{t \sim X}\left[ \ln\left(\frac{f_{\rm X}(t)}{f_{\rm Y}(t)}\right) \right].
    \]
\end{definition}

We will crucially rely on Pinsker's Inequality, which states that
\begin{align}
    \TV(X,Y) \leq \sqrt{\frac12 \KL(X \| Y)}
    \label{pinskers-ineq}
\end{align}
for all random variables \(X\) and \(Y\).
Additionally, it will be helpful to show that conditioning \(X\) on an event that holds with high probability with not severely change the TV distance or the expected value of our random variables.
We start with a claim about expectations on non-negative random variables.

\begin{lemma}[Conditioning cannot explode expectations]
    \label{lem:conditioning-expectation}
    Let \(X \in \bbR\) be a non-negative random variable.
    Let \cA be an event on \(X\) that holds with probability at least \(\nicefrac12\).
    Then,
    \[
        \E[X \mid \cA]
        \leq 2\E[X]
    \]
\end{lemma}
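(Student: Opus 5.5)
The plan is to write the conditional expectation as a ratio and bound the numerator and the denominator separately. By definition of conditioning on an event with positive probability,
\[
    \E[X \mid \cA] = \frac{\E[X \cdot \mathbbm{1}_{\cA}]}{\Pr[\cA]}.
\]
The assumption \(\Pr[\cA] \geq \nicefrac12\) guarantees that \(\Pr[\cA] > 0\), so this expression is well defined.

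\textbf{Numerator.} Since \(X\) is non-negative, \(X \cdot \mathbbm{1}_{\cA} \leq X\) holds pointwise. Monotonicity of expectation then gives \(\E[X \cdot \mathbbm{1}_{\cA}] \leq \E[X]\). This is the only place non-negativity is used: it prevents the indicator from removing negative mass, which could otherwise make the conditional expectation larger.

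\textbf{Denominator.} By hypothesis \(\Pr[\cA] \geq \nicefrac12\), so \(\nicefrac{1}{\Pr[\cA]} \leq 2\).

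\textbf{Combining.} Putting the two bounds together yields \(\E[X \mid \cA] \leq 2\E[X]\). If \(\E[X] = \infty\) the inequality holds trivially, so finiteness need not be assumed.

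There is no real obstacle in this argument. The only point needing care is stating the ratio form of the conditional expectation cleanly, together with a remark that the argument is equally valid for general (not necessarily discrete) random variables.
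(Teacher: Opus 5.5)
Your proof is correct and is essentially the paper's argument: the paper writes \(\E[X] = (1-p)\E[X\mid\cA] + p\,\E[X\mid\neg\cA]\) with \(p = \Pr[\neg\cA] \le \nicefrac12\), drops the non-negative complement term, and divides by \(1-p\), which is exactly your inequality \(\Pr[\cA]\,\E[X\mid\cA] = \E[X\mathbbm{1}_{\cA}] \le \E[X]\). Your ratio form is slightly cleaner, since it avoids conditioning on \(\neg\cA\) and the rearrangement that subtracts \(\E[X]\), which needs \(\E[X]<\infty\); note, though, that your final step \(\E[X]/\Pr[\cA] \le 2\E[X]\) also uses non-negativity (via \(\E[X] \ge 0\)), so it is not the only place that hypothesis enters.
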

\begin{proof}
    Let \(p \leq \delta\) be the exact probability that event \cA does not occur.
    Then,
    \[
        (1-p)\E[X\mid\cA] + p\E[X\mid\cA]
        = \E[X]
        = (1-p)\E[X] + p\E[X].
    \]
    Dividing by \(1-p\) and rearranging, we get
    \begin{align*}
        \E[X \mid\cA]
        &= \E[X] + \frac{p}{1-p}(\E[X] - \E[X \mid \cA]) \\
        &\leq \E[X] + \frac{p}{1-p}\E[X] \\
        &\leq 2\E[X],
    \end{align*}
    where the first inequality applies the bound \(\delta\) and uses that \(X\) is non-negative.
    The second inequality uses that \(\delta \leq 0.5\).
\end{proof}

We conclude with a statement about total variation on high-probability events.
\begin{lemma}[Total variation respects conditioning]
    \label{lem:conditioning-tv}
    Let \(X\) and \(Y\) be random variables over a shared space.
    Let \(\cA\) denote an event that \(X\) satisfies with probability at least \(1-\delta\).
    Let \(X_+\) be distributed as \(X\) conditioned on \cA.
    Then,
    \[
        \TV(X, Y) \leq \TV(X_+, Y) + \delta.
    \]
\end{lemma}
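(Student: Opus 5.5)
The plan is to apply the TV triangle inequality \eqref{tv:triangle-ineq} with \(Z = X_+\), which gives
\[
\TV(X,Y) \leq \TV(X, X_+) + \TV(X_+, Y).
\]
It then suffices to show \(\TV(X, X_+) \leq \delta\). Two routes are available, and I would present the coupling one, since it matches the paper's definition of total variation.

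\emph{Coupling construction.} Let \(p \leq \delta\) be the exact probability that \cA fails. Sample \(X\). If \cA holds, set \(X_+ = X\). If \cA fails, draw \(X_+\) independently from the law of \(X\) conditioned on \cA.

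\emph{Checking the marginal of \(X_+\).} For any measurable set \(S\),
\[
\Pr[X_+ \in S] = \Pr[X\in S, \cA] + p\Pr[X \in S \mid \cA] = (1-p)\Pr[X\in S\mid\cA] + p\Pr[X\in S\mid\cA] = \Pr[X\in S\mid \cA],
\]
so \(X_+\) has the law of \(X\) conditioned on \cA, as required.

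\emph{Bounding the disagreement.} Under this coupling, \(X \neq X_+\) can only happen when \cA fails. Hence \(\Pr[X\neq X_+] \leq p \leq \delta\), and taking the infimum over couplings gives \(\TV(X,X_+)\leq\delta\).

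\emph{Alternative density route.} One could instead compute directly. Writing \(f_{X_+} = f_X \mathbbm{1}_\cA/(1-p)\), the total variation is \(\tfrac12\int |f_X - f_{X_+}|\). On \(\cA^c\) this integral contributes \(p\). On \cA it contributes \(p/(1-p)\cdot(1-p) = p\). The total is therefore \(\tfrac12 \cdot 2p = p\).

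The only subtle point is the degenerate case \(p=1\), where conditioning on \cA is undefined. Here I would simply note that \(\delta\geq 1\) and that \(\TV\leq 1\) always, so the claim holds trivially. I do not expect any real obstacle; the key step is verifying that the coupling's marginal is correct, which is the short computation above.
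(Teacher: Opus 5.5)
Your proof is correct and is essentially the paper's argument: the same coupling (keep \(x_+ = x\) when \(\cA\) holds, otherwise redraw from the conditional law) gives \(\TV(X,X_+)\leq\delta\), and the triangle inequality finishes. Your explicit check that the coupled \(x_+\) has the correct marginal, and your treatment of the degenerate case \(\Pr[\cA]=0\), fill in details the paper leaves implicit.
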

\begin{proof}
    Begin by constructing a coupling of \(X\) and \(X_+\).
    We construct a pair \((x,x_+)\) by sampling \(x \sim X\).
    If the event \cA holds for \(x\), then take \(x_+ = x\).
    Otherwise, draw \(x_+ \sim X_+\).
    Then return the pair \((x,x_+)\) as our coupling.
    We have
    \[
        \TV(X,X_+)
        \leq \Pr[x \neq x_+]
        \leq 1-\Pr[\cA]
        \leq \delta.
    \]
    Then, by the TV triangle inequality \cref{tv:triangle-ineq},
    \[
        \TV(X, Y)
\leq \TV(X_+, Y) + \delta.
    \]
\end{proof}

\subsection{Random Matrices}

We will need to understand some facts about GOE and Wishart matrices.
\begin{fact}[Wishart pdf]
    Let \(\mW \sim \mathrm{Wishart}(n,k)\).
    Then the pdf of \mW with respect to the Lebesgue measure on symmetric matrices in \(\bbR^{n \times n}\) is
	\[
		f_{\rm wishart}(\mW) = \frac{(\det(\mW))^{\frac12(k-n-1)} \exp(-\frac12 \tr(\mW)) \cdot \mathbbm1_{[\mW \succeq \mat0]}}{2^{\frac12 kn} \pi^{\frac14 n(n-1)} \prod_{i=1}^n \Gamma(\frac{k+1-i}2)},
	\]
	where \(\Gamma(\cdot)\) is the Gamma function.
\end{fact}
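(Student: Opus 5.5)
We would prove this via the Bartlett (Cholesky) decomposition. Throughout we assume \(k \geq n\), which holds in the regime \(k \gtrsim n^3\) of \cref{thm:main}. For \(k < n\) the matrix \(\mW\) is singular almost surely and has no density with respect to Lebesgue measure on symmetric matrices.

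\textbf{Step 1: triangular factorization.} Since \(k \geq n\), the matrix \(\mH\) has full row rank almost surely. Take the QR factorization \(\mH^\top = \mat{Q}\mat{R}\), where \(\mat{Q}\in\bbR^{k\times n}\) has orthonormal columns and \(\mat{R}\) is upper triangular with positive diagonal. Then \(\mW = \mat{L}\mat{L}^\top\) with \(\mat{L} = \mat{R}^\top\) lower triangular, and \(\mat{L}\) is the Cholesky factor of \(\mW\).

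\textbf{Step 2: Bartlett's theorem.} The entries of \(\mat{L}\) are independent, with \(L_{ij}\sim\cN(0,1)\) for \(i>j\) and \(L_{ii}^2 \sim \chi^2_{k-i+1}\). We would prove this by running Gram--Schmidt on the rows \(h_1,\dots,h_n\) of \(\mH\). Conditional on \(h_1,\dots,h_{i-1}\), rotational invariance of \(h_i \sim \cN(0,\mI_k)\) gives the following:
\begin{itemize}
\item its coordinates along the orthonormal basis produced so far, which are \(L_{i1},\dots,L_{i,i-1}\), are iid \(\cN(0,1)\);
\item its squared residual norm \(L_{ii}^2\) is an independent \(\chi^2_{k-i+1}\).
\end{itemize}
These conditional laws do not depend on the conditioning, so the entries are independent unconditionally.

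\textbf{Step 3: density of \(\mat{L}\).} The density of \(L_{ii} = \sqrt{\chi^2_{k-i+1}}\) is \(x^{k-i}e^{-x^2/2}/(2^{\frac{k-i+1}{2}-1}\Gamma(\frac{k-i+1}{2}))\). Multiplying by the off-diagonal Gaussian densities, the joint density of \(\mat{L}\) is
\[
\frac{\exp(-\tfrac12 \tr(\mat{L}\mat{L}^\top)) \prod_i L_{ii}^{k-i}}{(2\pi)^{\frac14 n(n-1)} \prod_{i=1}^n 2^{\frac{k-i+1}{2}-1}\Gamma(\tfrac{k+1-i}{2})},
\]
using \(\tr(\mat{L}\mat{L}^\top)=\sum_{i\ge j}L_{ij}^2\).

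\textbf{Step 4: change of variables.} Next we push this density forward through the map \(\mat{L}\mapsto \mat{L}\mat{L}^\top\). This map is a bijection from lower triangular matrices with positive diagonal onto positive definite matrices. Its Jacobian is \(2^n\prod_{i=1}^n L_{ii}^{n-i+1}\), which follows by ordering the entries \(W_{ij}\) (\(i\ge j\)) lexicographically so that the derivative matrix is triangular, with diagonal entries \(2L_{jj}\) at position \((j,j)\) and \(L_{jj}\) at position \((i,j)\) for \(i>j\). Dividing the density of \(\mat{L}\) by the Jacobian gives
\[
\prod_i L_{ii}^{k-i-(n-i+1)} = \prod_i L_{ii}^{k-n-1} = \det(\mW)^{\frac12(k-n-1)}.
\]

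\textbf{Step 5: constants.} The remaining task is to check the normalizing constant, which reduces to collecting powers of \(2\). Ignoring the Jacobian's \(2^n\), the denominator contributes
\[
\tfrac14 n(n-1) + \sum_{i=1}^n \left(\tfrac{k-i+1}{2} - 1\right) = \tfrac14 n(n-1) + \tfrac12 nk - \tfrac14 n(n-1) - n = \tfrac12 nk - n.
\]
Dividing by the Jacobian's \(2^n\) then leaves exactly \(2^{\frac12 kn}\). The powers of \(\pi\) combine to \(\pi^{\frac14 n(n-1)}\), and the Gamma factors appear as stated. This recovers the claimed formula, with the indicator \(\mathbbm1_{[\mW\succeq \mat 0]}\) recording the support.

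The main obstacle is Step 4, the Jacobian of the Cholesky map, together with the bookkeeping of the powers of \(2\) in Step 5. As an independent check on the constants, we would verify that the resulting density has the correct Laplace transform \(\E[\exp(-\tfrac12\tr(\mat\Theta\mW))] = \det(\mI+\mat\Theta)^{-k/2}\), using the multivariate Gamma integral.
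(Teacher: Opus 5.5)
Your proof is correct and complete. The Bartlett independence argument, the chi density for \(L_{ii}\), the Jacobian \(2^n\prod_i L_{ii}^{n-i+1}\) of the Cholesky map, and the power-of-\(2\) bookkeeping (total exponent \(\tfrac12 nk - n\), then a further \(2^n\) from the Jacobian) all check out. Your coordinates \(W_{ij}\), \(i \ge j\), are the same Lebesgue measure the paper uses, which is consistent with its GOE normalization.

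The paper gives no proof of this statement: it quotes the formula as a standard fact, so there is no competing route to compare against. Your Bartlett-decomposition derivation is the classical textbook argument.

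Your caveat \(k \ge n\) is a genuine hypothesis that the paper leaves implicit. Without it, \(\mW\) has no Lebesgue density and the normalizing factor \(\Gamma(\frac{k+1-n}{2})\) is not meaningful. It does hold in the regime \(k \gtrsim n^3\), which is where the fact is used.
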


\begin{fact}[GOE pdf]
    Let \(\mG\sim\mathrm{GOE}(n)\).
    Then the pdf of \mG with respect to the Lebesgue measure on symmetric matrices in \(\bbR^{n \times n}\) is
    \[
		f_{\rm goe}(\mG)
		= \frac{
			\exp(-\frac14\norm{\mG}_{\rm F}^2)
		}{
			(2\pi)^{\frac14 n(n+1)} 2^{\frac n2}
		}.
    \]
\end{fact}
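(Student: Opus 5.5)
The plan is to compute the density directly from the definition of $\mathrm{GOE}(n)$, using independence of the upper-triangular entries. First I fix the convention for the reference measure. The ``Lebesgue measure on symmetric matrices in $\bbR^{n\times n}$'' is the Lebesgue measure on $\bbR^{n(n+1)/2}$, under the identification of a symmetric matrix $\mG$ with its upper-triangular coordinates $([\mG]_{ij})_{i\le j}$. This is the same convention under which the Wishart density in the preceding fact is stated. With this identification, $\mG$ is a vector of $n(n+1)/2$ independent Gaussian coordinates. Its joint density is therefore the product of the one-dimensional marginal densities.

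Next I write out the marginals. For each of the $n$ diagonal entries, $[\mG]_{ii}\sim\cN(0,2)$ has density $\frac{1}{\sqrt{4\pi}}\exp(-\frac14 [\mG]_{ii}^2)$. For each of the $n(n-1)/2$ off-diagonal entries with $i<j$, $[\mG]_{ij}\sim\cN(0,1)$ has density $\frac{1}{\sqrt{2\pi}}\exp(-\frac12 [\mG]_{ij}^2)$. Multiplying all of these gives
\[
f_{\rm goe}(\mG) = \frac{\exp\bigl(-\frac14\sum_{i}[\mG]_{ii}^2 - \frac12\sum_{i<j}[\mG]_{ij}^2\bigr)}{(4\pi)^{n/2}(2\pi)^{n(n-1)/4}}.
\]

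I then simplify the exponent and the constant separately. For the exponent, symmetry gives $\norm{\mG}_{\rm F}^2 = \sum_i [\mG]_{ii}^2 + 2\sum_{i<j}[\mG]_{ij}^2$. Hence the exponent equals exactly $-\frac14\norm{\mG}_{\rm F}^2$, because each off-diagonal pair is counted twice in the Frobenius norm. For the constant, I write $(4\pi)^{n/2} = 2^{n/2}(2\pi)^{n/2}$. Combining the powers of $2\pi$ gives $(2\pi)^{n/2 + n(n-1)/4} = (2\pi)^{n(n+1)/4}$, so the normalizer is $(2\pi)^{\frac14 n(n+1)}2^{\frac n2}$, as claimed.

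For completeness, I would also check the alternative description $\mG = \frac{1}{\sqrt2}(\mH+\mH^\top)$. In that form the diagonal entries are $\sqrt2\,[\mH]_{ii}$, which have variance $2$. For $i<j$, the off-diagonal entries are $\frac1{\sqrt2}([\mH]_{ij}+[\mH]_{ji})$, which have variance $1$. These entries are independent across distinct pairs $i\le j$, so this description is consistent with the definition. The only real subtlety, and the step I would be most careful about, is the choice of reference measure. If one instead used the Frobenius-induced volume on symmetric matrices, a Jacobian factor of $2^{n(n-1)/4}$ would appear. So I would state explicitly that the upper-triangular coordinates carry the standard Lebesgue measure. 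The rest is routine bookkeeping.
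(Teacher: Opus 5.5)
Your proposal is correct: multiplying the \(n\) diagonal \(\cN(0,2)\) densities and the \(n(n-1)/2\) off-diagonal \(\cN(0,1)\) densities gives the exponent \(-\frac14\norm{\mG}_{\rm F}^2\) and the normalizer \((4\pi)^{n/2}(2\pi)^{n(n-1)/4} = (2\pi)^{\frac14 n(n+1)}2^{\frac n2}\), which is exactly the stated formula. The paper states this as a standard fact without proof, so there is no argument of its own to compare against. Your explicit choice of Lebesgue measure on the upper-triangular coordinates is the right convention: it matches the Wishart density, and the paper later divides \(f_{\rm shifted}\) by \(f_{\rm wishart}\), which only makes sense if both densities are taken with respect to the same reference measure.
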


\begin{importedtheorem}[Spectral norm of a GOE, \protect{\cite[Thm.~4.4.3]{vershynin2025high}}]
    \label{impthm:goe-spectral-norm}
    Let \(\mG\sim\mathrm{GOE}(n)\).
    Then, with probability at least \(1-\delta\) we have \(\norm{\mG}_2 \lesssim \sqrt{n} + \sqrt{\log(1/\delta)}\).
\end{importedtheorem}
\begin{importedtheorem}[Trace moments of a GOE, \protect{\cite[Pg.~139]{tao2023topics}}]
    Let \(\mG\sim\mathrm{GOE}(n)\).
    Then,
    \[
        \E[\tr(\mG)] = 0,
        \quad
        \E[\tr(\mG^2)] \lesssim n^2,
        \quad
        \E[\tr(\mG^3)] = 0,\text{ and}
        \quad
        \E[\tr(\mG^4)] \lesssim n^3.
    \]
\end{importedtheorem}

Given the above facts, and the symmetry of the GOE distribution, we can conclude a helpful fact about the trace moments of a GOE conditioned on a bound on its spectral norm.

\begin{lemma}[Spectral norm bounds do not change GOE trace moments]
    \label{lem:goe-moments-conditioned}
    Fix parameters \(n, \delta\).
    Let \(\mG\sim\mathrm{GOE}(n)\).
    Let \(\cA\) be the event that \(\norm{\mG}_2 \lesssim \sqrt{n} + \sqrt{\log(1/\delta)}\)
    Then,
    \[
        \E[\tr(\mG) \mid \cA] = 0,
        \quad
        \E[\tr(\mG^2) \mid \cA] \lesssim n^2,
        \quad
        \E[\tr(\mG^3) \mid \cA] = 0,\text{ and}
        \quad
        \E[\tr(\mG^4) \mid \cA] \lesssim n^3.
    \]
\end{lemma}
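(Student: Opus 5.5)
The plan is to handle the odd moments by a symmetry argument and the even moments by \cref{lem:conditioning-expectation}.

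\textbf{Odd moments.} The GOE law is invariant under \(\mG \mapsto -\mG\), since each entry is a centered Gaussian and the entries are independent. The event \(\cA\) depends only on \(\norm{\mG}_2\), and \(\norm{-\mG}_2 = \norm{\mG}_2\), so \(\cA\) is invariant under this map as well. Hence the conditional law of \(\mG\) given \(\cA\) is also symmetric under negation. The functions \(\tr(\mG)\) and \(\tr(\mG^3)\) are odd in \(\mG\), so their conditional distributions are symmetric about zero. Their conditional expectations therefore vanish, provided they are finite. Finiteness is immediate: on \(\cA\) we have \(|\tr(\mG^p)| \leq n\norm{\mG}_2^p\), which is bounded. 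Alternatively, \(\E|\tr(\mG^p)| < \infty\) unconditionally and conditioning on an event of positive probability preserves integrability.

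\textbf{Even moments.} Here \(\tr(\mG^2) = \norm{\mG}_{\rm F}^2 \geq 0\) and \(\tr(\mG^4) = \norm{\mG^2}_{\rm F}^2 \geq 0\) are non-negative random variables, so \cref{lem:conditioning-expectation} applies as long as \(\Pr[\cA] \geq \nicefrac12\). We would argue this from \cref{impthm:goe-spectral-norm}, which gives \(\Pr[\cA] \geq 1-\delta\). This requires reading the implicit constant in the definition of \(\cA\) as the same constant as in \cref{impthm:goe-spectral-norm}, and assuming \(\delta \leq \nicefrac12\), which is the regime the lemma is meant for. If \(\delta > \nicefrac12\), replacing \(\delta\) by \(\min(\delta,\nicefrac12)\) changes the threshold \(\sqrt n + \sqrt{\log(1/\delta)}\) only by a constant factor, so it can be absorbed into \(\lesssim\). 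The imported trace-moment bounds then give
\[
    \E[\tr(\mG^2)\mid\cA] \leq 2\E[\tr(\mG^2)] \lesssim n^2
    \qquad\text{and}\qquad
    \E[\tr(\mG^4)\mid\cA] \leq 2\E[\tr(\mG^4)] \lesssim n^3.
\]

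\textbf{Main obstacle.} The main subtlety is bookkeeping rather than mathematics. The ``\(\lesssim\)'' in the definition of \(\cA\) must be pinned to the constant from \cref{impthm:goe-spectral-norm}, so that \(\cA\) genuinely has probability at least \(1-\delta \geq \nicefrac12\). A secondary point is making sure the symmetry argument is stated for the conditional measure rather than the unconditional one.
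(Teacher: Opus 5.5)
Your proof is correct and follows the paper's plan: a symmetry argument for the odd moments, and \cref{lem:conditioning-expectation} with $\Pr[\cA]\ge 1-\delta$ for the even ones. Your sign symmetry $\mG\mapsto-\mG$ is the right one. The paper appeals to rotational invariance, which by itself says nothing about odd trace moments, since $\tr(\mG^p)$ is already conjugation-invariant. Your explicit handling of $\delta\le\nicefrac12$ and of the constant in $\cA$ also fills a small gap the paper leaves implicit.
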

\begin{proof}
    For the odd moments, we use that the GOE matrix is rotationally invariant and that the event
    \[
        \cA = \left\{\mG ~:~ \norm{\mG}_2 \lesssim \sqrt n + \sqrt{\log(1/\delta)} \right\}
    \]
    is also rotationally invariant.
    Thereby, conditioning on \cA does not change the expected odd trace moments of \mG.
    For the even moments, we note that \cA happens with probability at least \(1-\delta\) by \cref{impthm:goe-spectral-norm}.
    Further, we have that \(\tr(\mG)^2\) and \(\tr(\mG)^4\) are non-negative.
    So, by \cref{lem:conditioning-expectation}, we know that
    \[
        \E[\tr(\mG^2) \mid \cA]
        \leq 2\E[\tr(\mG^2)]
        \lesssim n^2,
    \]
    with a similar bound for \(\tr(\mG^4)\).
\end{proof}

\section{The proof}
The proof moves in two arcs.
We bound the likelihood ratio between the pdfs of the shifted GOE matrix and the Wishart matrix.
This is a delicate and precise bound, pushed ahead to \cref{sec:ratio-bound}, resulting in the following statement.
\begin{theorem}[Bound on the log-pdf ratio]
    \label{thm:log-ratio-bound}
    Let \mA be a psd matrix.
    Let \(f_{\rm shifted}\) be the pdf associated with the shifted GOE matrix, and let \(f_{\rm wishart}\) be the Wishart pdf.
    Then, for any PSD matrix \(\mA \succeq \frac k2 \mI\), the log pdf ratio \(\alpha(\mA) \defeq \log(f_{\rm shifted}(\mA) / f_{\rm wishart}(\mA))\) satisfies
    \[
        \alpha(\mA)
        \lesssim \frac{n}{k}\tr(\mA-k\mI)
    	- \frac{n}{k^2}\tr((\mA-k\mI)^2)
    	- \frac{1}{k^2}\tr((\mA-k\mI)^3)
    	+ \frac{k}{k^4}\tr((\mA-k\mI)^4)
        + \frac{n^3}{k}.
    \]
\end{theorem}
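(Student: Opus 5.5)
We will compute \(\alpha(\mA)\) explicitly from the two densities and Taylor expand in \(\mat{B}\defeq\mA-k\mI\). Since \(\widehat\mG=\sqrt k\mG+k\mI\), the change of variables on symmetric matrices (dimension \(n(n+1)/2\)) gives
\[
f_{\rm shifted}(\mA)=k^{-\frac14 n(n+1)}\frac{\exp(-\frac1{4k}\norm{\mat B}_{\rm F}^2)}{(2\pi)^{\frac14 n(n+1)}2^{\frac n2}}.
\]
Taking the log of the ratio with the Wishart density, \(\alpha(\mA)\) splits into a constant term \(c_{n,k}\) and an \(\mA\)-dependent term:
\[
\alpha(\mA)=c_{n,k}-\tfrac{1}{4k}\tr(\mat B^2)+\tfrac12\tr(\mA)-\tfrac12(k-n-1)\log\det\mA .
\]
Here \(c_{n,k}\) collects \(\frac12 kn\log 2\), \(\frac14 n(n-1)\log\pi\), \(\sum_i\log\Gamma(\frac{k+1-i}2)\), \(-\frac14 n(n+1)\log(2\pi k)\) and \(-\frac n2\log 2\).

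\textbf{Step 1 (the \(\mA\)-dependent part).} Write \(\log\det\mA=n\log k+\tr\log(\mI+\mat B/k)\). The hypothesis \(\mA\succeq\frac k2\mI\) gives \(\mat B/k\succeq-\frac12\mI\). For \(x\ge-\frac12\) we have
\[
\log(1+x)\ge x-\tfrac{x^2}2+\tfrac{x^3}3-Cx^4,
\]
since the remainder of the cubic Taylor polynomial is bounded by \(Cx^4\) on \([-\frac12,\infty)\). For large \(x\) the left side is at least \(x-x^2/2+x^3/3-Cx^4\) trivially once \(C\) is chosen suitably. Applying this eigenvalue-wise, and using that the coefficient \(-\frac12(k-n-1)\) is negative, gives an upper bound
\[
-\tfrac12(k-n-1)\Big[n\log k+\tfrac1k\tr\mat B-\tfrac1{2k^2}\tr\mat B^2+\tfrac1{3k^3}\tr\mat B^3-\tfrac C{k^4}\tr\mat B^4\Big].
\]
Combined with \(\frac12\tr\mA=\frac12 kn+\frac12\tr\mat B\) and \(-\frac1{4k}\tr\mat B^2\), the key cancellations occur:
\begin{itemize}
\item the \(\tr\mat B\) terms leave only \(\frac{n+1}{2k}\tr\mat B\);
\item the \(\tr\mat B^2\) terms leave \(-\frac{n+1}{4k^2}\tr\mat B^2\);
\item the cubic term is \(-\frac{k-n-1}{6k^3}\tr\mat B^3\), which is \(-\frac1{6k^2}\tr\mat B^3\) plus an error \(\frac{n+1}{6k^3}\tr\mat B^3\);
\item the quartic term is \(\lesssim\frac{k}{k^4}\tr\mat B^4\).
\end{itemize}
The stray cubic error must be absorbed. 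By AM-GM applied eigenvalue-wise, \(\frac{n}{k^3}|\lambda|^3\le\frac{n}{k^2}\cdot\frac{\lambda^2}{4}\cdot(\text{stuff})+\ldots\); more simply, \(\frac{n}{k^3}|\lambda|^3\le\frac{n^2}{k^2}\lambda^2\cdot\frac{1}{k}+\frac{1}{k^3}\lambda^4\), and both pieces are dominated by the quadratic and quartic terms in the claimed bound when \(k\gtrsim n^3\). The signs of the \(\tr\mat B^2\) and \(\tr\mat B^3\) terms should be tracked carefully here, since the statement keeps them as signed quantities up to constants.

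\textbf{Step 2 (the constant).} Putting the terms together, the constant is \(c_{n,k}+\frac12kn-\frac12(k-n-1)n\log k\), and we must show it is \(\lesssim n^3/k\). For this we use Stirling's formula with explicit error,
\[
\log\Gamma(z)=(z-\tfrac12)\log z-z+\tfrac12\log(2\pi)+O(1/z),
\]
at \(z=\frac{k+1-i}{2}\). Expanding \(\log\frac{k+1-i}{2}=\log\frac k2+\log(1-\frac{i-1}k)\) to second order, the leading terms \(\frac k2\log\frac k2\cdot n\), \(\frac{kn}{2}\), the \(\pi\) powers and the \(\log 2\) powers all cancel against the other pieces. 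The first-order terms \(\sum_i(i-1)\) must then cancel the \(n\log k\) and \(n^2\log k\) pieces. What remains is \(\sum_i O(i^2/k)+O(n/k)=O(n^3/k)\), including the second-order term of \(\log(1-\frac{i-1}{k})\) multiplied by \(\frac k2\), which gives \(\sum_i (i-1)^2/k\).

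\textbf{Main obstacle.} Step 2 is the main obstacle. It is pure bookkeeping, but every \(\log k\), \(\log 2\) and \(\log\pi\) coefficient must cancel exactly, leaving only an \(O(n^3/k)\) remainder. In particular, the \(\frac14 n(n+1)\log k\) from the GOE scaling must match \(\sum_i\frac{(k-i)}{2}\log\frac k2\) against \(\frac12(k-n-1)n\log k\). I would first verify this cancellation for \(n=1\), where it reduces to Stirling for a single chi-square density, and then do the general sum.
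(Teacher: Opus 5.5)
Your proposal is correct and is essentially the paper's proof. The paper likewise reduces \(\alpha(\mA)\) to an eigenvalue-wise function \(\beta\) and Taylor expands it to third order about \(k\), with a fourth-order Lagrange remainder controlled by \(\xi\ge k/2\); this is equivalent to your bound on \(\log(1+x)\) for \(x\ge-\frac12\). It then disposes of the normalizing constants via Stirling and \(\log(1-t)\le -t\), with exactly the \(\log 2\), \(\log\pi\), \(\log k\) cancellations you anticipate.

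Two small remarks. First, your AM--GM absorption of the stray \(\frac{n+1}{6k^3}\tr\mat{B}^3\) into the negative quadratic and the quartic terms is valid once \(k\gtrsim n\), and is a precision the paper simply hides inside \(\lesssim\). Second, in Step 2 the \(\log k\) terms cancel among themselves, while the first-order terms \(-\frac{i-1}{2}\) cancel against the \(+\frac{i-1}{2}\) from the \(-z\) term of Stirling; they do not cancel the \(\log k\) pieces as you wrote.
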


Before proving \cref{thm:log-ratio-bound}, we show that it suffices to complete the main proof.
\subsection{Proof of \texorpdfstring{\cref{thm:main}}{the main result}}
Let \(\widehat\mG = \sqrt k \mG + k \mI\) be our shifted GOE matrix, so that \(\mG \sim \mathrm{GOE}(n)\).
Let \(\cA\) denote the event that \(\norm{\mG}_2 \leq \frac12\sqrt k\).
By \cref{impthm:goe-spectral-norm}, and since \(k \geq n\), we know that \cA fails with probability at most \(\delta = e^{-k}\).
Our goal is to show that the TV distance between \(\widehat\mG\) and a matrix \(\mW\sim\mathrm{wishart}(n,k)\) is small.
We start by conditioning the \(\widehat\mG\) on \cA and applying Pinsker's inequality.
Let \(\widehat\mG_+\) denote the distribution of \(\widehat\mG\) conditioned on \cA holding.
That is, by \cref{lem:conditioning-tv} and \cref{pinskers-ineq},
\[
    \TV(\widehat\mG, \mW)
    \leq \TV(\widehat\mG_+, \mW) + e^{-k}
    \leq \sqrt{\frac12\KL(\widehat\mG_+ \| \mW)} + e^{-k}.
\]
So, we turn our attention to bounding this KL Divergence.
Remark that for any set \(\cS\subseteq\bbR^{n \times n}\),
\[
    \Pr[\widehat\mG_+ \in \cS]
    = \Pr[\widehat\mG \in \cS \mid \cA]
    = \frac{\Pr[\widehat\mG \in \cS, \cA]}{\Pr[\cA]}
    \leq \frac{\Pr[\widehat\mG \in \cS]}{\Pr[\cA]}
    \leq (1+2e^{-k}) \Pr[\widehat\mG \in\cS]
\]
and therefore that the pdf \(f_+(\mA)\) of \(\widehat\mG_+\) satisfies \(f_+(\mA) \leq (1+2e^{-k})f_{\mathrm{shifted}}(\mA)\).
Thereby, the KL divergence between \(\widehat\mG_+\) and \(\mW\) is
\[
    \KL(\widehat\mG_+ \| \mW)
    = \E\left[\log\left(\frac{f_+(\widehat\mG_+)}{f_{\rm wishart}(\widehat\mG_+)}\right)\right]
    \leq \E\left[\log\left(\frac{f_{\rm shifted}(\widehat\mG_+)}{f_{\rm wishart}(\widehat\mG_+)}\right)\right] + e^{-k},
\]
where we have used that \(\log(1+2e^{-k}) \leq e^{-k}\).
Next, since \cA ensures that \(\norm{\mG}_2 \lesssim \sqrt k\), we know that
\[
    \widehat\mG_+
\succeq \frac k2 \mI.
\]
So, we can apply \cref{thm:log-ratio-bound} to the matrix \(\widehat\mG_+\).
In particular, since \(\widehat\mG_+ - k\mI = k\mG\) conditioned on \cA holding, we have
\[
    \KL(\widehat\mG_+ \| \mW)
    \lesssim \E\left[\frac{n}{k}\tr(\sqrt k \mG)
    	- \frac{n}{k^2}\tr((\sqrt k \mG)^2)
    	- \frac{1}{k^2}\tr((\sqrt k \mG)^3)
    	+ \frac{k}{k^4}\tr((\sqrt k \mG)^4)
         \mid \cA \right] + \frac{n^3}{k} + e^{-k}.
\]
Applying \cref{lem:goe-moments-conditioned} then tells us that
\[
    \KL(\widehat\mG_+ \| \mW)
    \leq \frac{n^3}{k} + e^{-k}.
\]
Returning to our total variation bound, we find that
\[
    \TV(\widehat\mG, \mW)
    \lesssim \sqrt{\frac{n^3}{k}} + e^{-k}
    \lesssim \sqrt{\frac{n^3}{k}}.
\]

\subsection{Proof of \texorpdfstring{\cref{thm:log-ratio-bound}}{the log-ratio bound}}
\label{sec:ratio-bound}
We begin by invoking Stirling's approximation, which says that \(\log\Gamma(x) \leq h(x) + \cO(1/x)\), where
\[
    h(x) = \left(x-\frac12\right)\ln(x) - x + \frac12 \ln(2\pi).
\]
If we substitute in the value \(x=(k+1-i)/2\) where \(i\in\{1,\ldots,n\}\), so that
\begin{align*}
    h\left( \frac{k+1-i}{2} \right)
    = \frac12 \left(
        \log(2\pi)
        - k
        + (i-1)
        + (k-i)\log\left(\frac{k-(i-1)}{2}\right)
    \right).
\end{align*}
The logarithm on the right can be bounded as
\[
    \log\left(\frac{k}{2} \cdot \left(1 - \frac{i-1}{k}\right)\right)
    = \log\left(\frac k2\right) + \log\left(1 - \frac{i-1}{k}\right)
    \leq \log\left(\frac k2\right) - \frac{i-1}{k},
\]
which in turn yields
\[
    (k-i)\log\left(\frac{k-(i-1)}{2}\right)
    \leq (k-i)\log(k) - (k-i)\log(2) - \left(1-\frac{i}{k}\right)(i-1).
\]
We then can bound our use of Stirling's approximation as
\[
    h\left( \frac{k+1-i}{2} \right)
    \leq \frac12 \left(
        \log(2\pi)
        - k
        + (k-i)\log(k)
        - (k-i)\log(2)
        + \cO\left(\frac{i^2}{k}\right)
    \right).
\]
Then, summing over all \(i\in\{1,\ldots,n\}\), we get
\[
    \sum_{i=1}^n
    h\left( \frac{k+1-i}{2} \right)
    \leq \frac12 \left(
        \left(\frac{n(n+3)}{2} - nk\right)\log(2)
        + n\log(\pi)
        - nk
        + \left(nk - \frac{n(n+1)}{2}\right)\log(k)
    \right)
    + \cO\left(\frac{n^3}{k}\right).
\]
We can then use this bound to examine the log of the pdf of a Wishart matrix.
Since \mA is psd, we have
\begin{align*}
    &2\log(f_{\rm wishart}(\mA)) \\
    &=  (k-n-1)\log\det(\mA)
        -\tr(\mA)
        -nk\log(2)
        -\frac{n(n-1)}{2}\log(\pi)
        -\sum_{i=1}^n \log\left(\Gamma\left(\frac{k+1-i}{2}\right)\right)
    \\
    &\geq
        (k-n-1)\log\det(\mA)
        -\tr(\mA)
        -nk\log(2)
        -\frac{n(n-1)}{2}\log(\pi)
        -\sum_{i=1}^n h\left(\frac{k+1-i}{2}\right)
    - \cO\left(\frac{n}{k}\right) \\
    &\geq
        (k-n-1)\log\det(\mA)
        -\tr(\mA)
        -\frac{n(n+3)}{2}\log(2)
        -\frac{n(n+1)}{2}\log(\pi)
        - \left(nk - \frac{n(n+1)}{2}\right)\log(k)
        + nk
- \cO\left(\frac{n^3}{k}\right).
\end{align*}
We now look at the the shifted matrix \(\widehat \mG = \sqrt k \mG + k \mI\) where \(\mG\sim\mathrm{GOE}(n)\).
This matrix has pdf
\[
    f_{\rm shift} (\mA) = \frac{\exp(-\frac1{4k}\norm{\mA-k\mI}_{\rm F}^2)}{(2\pi k)^{\frac14 n(n+1)} 2^{\frac n2}}.
\]
So, we can expand the log of its pdf as
\[
    2\log(f_{\rm shift}(\mA))
    = \frac{-1}{2k}\norm{\mA-k\mI}_{\rm F}^2
    - \frac{n(n+3)}{2}\log(2)
    - \frac{n(n+1)}{2}\log(\pi k).
\]
Then we can observe the log of the ratio between the two pdfs, for a psd matrix \mA, to be
\begin{align*}
    \alpha(\mA)
    &\defeq \log \left(\frac{f_{\rm shift}(\mA)}{f_{\rm wishart}(\mA)}\right) \\
    &\leq \frac12\left\{
        \frac{-1}{2k}\norm{\mA-k\mI}_{\rm F}^2
        -(k-n-1)\log\det(\mA)
        +\tr(\mA)
        +n(k-n-1)\log(k)
        -nk
    \right\} + \cO\left(\frac{n^3}{k}\right).
\end{align*}
Let \(\lambda_i\) denote the \(i^{th}\) eigenvalue of \(\mA\).
Then, we can expand this log ratio in terms of the sum
\[
    \alpha(\mA)
    \leq \frac12
    \sum_{i=1}^n
    \left(
        \frac{-1}{2k}(\lambda_i - k)^2
        -(k-n-1)\log(\lambda_i/k)
        +\lambda_i-k
    \right)
    + \cO\left(\frac{n^3}{k}\right).
\]
Hence, if we introduce a function
\[
    \beta(x) = \frac{-1}{2k}(x - k)^2 - (k-n-1)\log(x/k) + (x-k),
\]
then \(\alpha(\mA) \leq \frac12 \sum_i \beta(\lambda_i(\mA)) + \cO(n^3/k)\).
We now construct the degree-3 Taylor approximation to \(\beta\) centered at \(x=k\).
To do this, we compute the derivatives
\[
\beta(k) = 0
	\qquad
	\beta'(k) = \frac{n+1}{k}
	\qquad
	\beta''(k) = -\frac{n+1}{k^2}
	\qquad
	\beta'''(k) = -\frac{2(k-n-1)}{k^3}
	\qquad
	\beta''''(x) = \frac{6(k-n-1)}{k^4},
\]
which in turn implies that for all \(x\in\bbR\), there exists a value \(\xi\) between \(x\) and \(k\) such that
\[
	\beta(x)
	= \frac{n+1}{k}(x-k)
	- \frac{n+1}{2k^2}(x-k)^2
	- \frac{k-n-1}{3k^3}(x-k)^3
	+ \frac{k-n-1}{4\xi^4}(x-k)^4.
\]
Note that we will evaluate \(\beta(x)\) only at the eigenvalues of \mA, which we are guaranteed to be at least \(k/2\).
Therefore, we know that \(\xi > k/2\) in our case, and we find that
\[
	\beta(\lambda_i)
	\leq \frac{n+1}{k}(\lambda_i-k)
	- \frac{n+1}{2k^2}(\lambda_i-k)^2
	- \frac{k-n-1}{3k^3}(\lambda_i-k)^3
	+ \frac{4(k-n-1)}{k^4}(\lambda_i-k)^4.
\]
Summing over all eigenvalues and returning to our log-ratio function, we find
\[
    \alpha(\mA) \lesssim \frac{n}{k}\tr(\mA-k\mI)
	- \frac{n}{k^2}\tr((\mA-k\mI)^2)
	- \frac{1}{k^2}\tr((\mA-k\mI)^3)
	+ \frac{k}{k^4}\tr((\mA-k\mI)^4)
    + \cO\left(\frac{n^3}{k}\right),
\]
completing the proof.

\bibliographystyle{halpha}
{\footnotesize\newcommand{\etalchar}[1]{$^{#1}$}

 }

\end{document}